\newtheorem*{thm}{Theorem}
\newtheorem{prop}{Proposition}
\newtheorem*{cor}{Corollary}
\newtheorem*{rem}{Remark}
\title{On the monodromy of almost toric fibrations on the complex projective plane}
\date{}
\begin{document}
\begin{center}

\Large{{\bf On the monodromy of almost toric fibrations on the complex projective plane}}
\vspace{5mm}

\normalsize

{\bf Gleb Smirnov}\footnote{SISSA and Moscow State University. E-mail: gsmirnov@sissa.it
}
\end{center}

\begin{abstract}
We describe the monodromy for almost toric Lagrangian fibrations on the complex projective plane $\mathbb{CP}^2$.
\end{abstract}

\section{Introduction}

Let $(M, \omega)$ be an almost toric 4-manifold in the sense of Symington and Leung (see \cite{LS}), i.e. a closed symplectic manifold admitting an almost toric fibration. Recall that an almost toric fibration is a Lagrangian torus fibration
$$
\displaystyle{\pi\colon M \mapsto B}
$$
such that the fibers have only focus-focus or elliptic singularities.

In \cite{LS}, Leung and Symington gave a complete classification of the total spaces and bases of almost toric fibrations. They also formulated the classification problem, up to a fiber preserving symplectomorphism, for those fibrations.

Let us denote by $B_0$ the set of regular values of the map $\pi$, i.e. the part of $B$ on which $\pi$ defines a torus bundle.

We shall say that the preimage $M_0 = \pi^{-1}(B_0)$ is the \emph{regular part} of $\pi$.
The set of all critical values of $\pi$ is called the \emph{bifurcation diagram} of $\pi$.

Keeping in mind the classification problem for almost toric fibrations, we are studying the topology of the bundle $\pi \colon M_0 \mapsto B_0$. The structure of this bundle is completely determined by its monodromy.

Perhaps, the simplest example of an almost toric manifold is the complex projective plane $\mathbb{CP}^2$. In this case, possible bases of an almost toric fibration are described by the following statement. 

\begin{thm}[\cite{LS}]
There exist precisely four distinct bases for an almost toric fibration on the complex projective plane. They are shown in Figure \ref{fig:four_bases}. 
\begin{figure}[h]
        \centering
        \begin{subfigure}[b]{0.1\textwidth}
                \includegraphics[width=\textwidth]{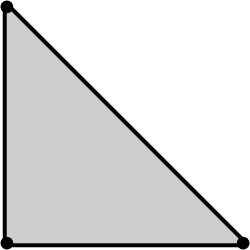}
                \caption{}
                \label{fig:four_bases_0}
        \end{subfigure}\qquad %
        ~ 
        \begin{subfigure}[b]{0.1\textwidth}
                \includegraphics[width=\textwidth]{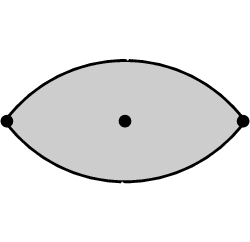}
                \caption{}
                \label{fig:four_bases_1}
        \end{subfigure}\qquad 
        ~ 
        \begin{subfigure}[b]{0.1\textwidth}
                \includegraphics[width=\textwidth]{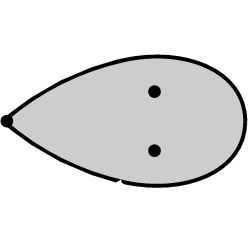}
                \caption{}
                \label{fig:four_bases_2}
        \end{subfigure}\qquad 
        \begin{subfigure}[b]{0.1\textwidth}
                \includegraphics[width=\textwidth]{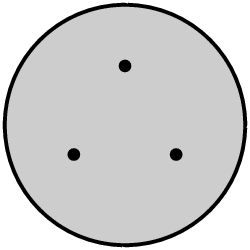}
                \caption{}
                \label{fig:four_bases_3}
        \end{subfigure}
        \caption{Four bases of almost toric fibrations on $\mathbb{CP}^2$.}\label{fig:four_bases}
\end{figure}
\end{thm}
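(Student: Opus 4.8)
The plan is to combine a rigidity constraint coming from the Euler characteristic with the \emph{nodal trade} operation, reducing every base to a single toric model. First I would invoke the general structure theory of \cite{LS}: the base of an almost toric fibration on a closed symplectic four-manifold is a compact integral-affine surface with corners carrying finitely many interior nodes (the images of the focus-focus fibers), and the total space is reconstructed from these data. Since $\mathbb{CP}^2$ is simply connected, its base must be a disk; the open boundary edges carry the elliptic circle-fibers and the corners carry the elliptic-elliptic toric points, while each interior node is the image of a focus-focus singular fiber.

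Next I would record the Euler-characteristic identity $\chi(M) = V + n$, where $V$ is the number of corners of the base and $n$ is the number of nodes. Stratifying the base, the interior regular points carry $T^2$-fibers and the open edges carry $S^1$-fibers, each contributing $0$; each corner carries a point-fiber and each node carries a pinched-torus fiber of Euler characteristic $1$, so the corners contribute $V$ and the nodes contribute $n$. This is consistent with the fact that a nodal trade removes one corner and creates one node, leaving the count invariant. For $M = \mathbb{CP}^2$ we have $\chi = 3$, hence $V + n = 3$; as $V \geq 0$ and $n \geq 0$, the only admissible pairs are $(V,n) \in \{(3,0),(2,1),(1,2),(0,3)\}$, which are exactly the four cases of Figure \ref{fig:four_bases}.

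It then remains to pin down, for each $n$, a unique base up to integral-affine equivalence. The case $n = 0$ is purely toric: the base is a Delzant polygon whose toric manifold is $\mathbb{CP}^2$, and a standard toric-geometry argument shows it is the standard triangle, unique up to $AGL(2,\mathbb{Z})$. For $n \geq 1$ I would argue that each node can be \emph{untraded} — the inverse nodal trade pulls a node out to the boundary and reinstates a corner — so that every base with $n$ nodes arises from the triangle by trading $n$ of its three corners. Since the symmetric group $S_3$ acts transitively on the $n$-element subsets of the three corners, all such choices are affinely equivalent, giving exactly one base for each $n \in \{1,2,3\}$ and matching the figure.

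The hard part will be the untrading/uniqueness step. One must verify that every node in an almost toric base of $\mathbb{CP}^2$ is \enquote{visible}, meaning its eigenray can be directed to the boundary without obstruction so that the reverse nodal trade is always available, and that distinct nodes admit disjoint outward eigenrays. Controlling the affine monodromy around the nodes and checking these compatibility conditions is where the real work lies: the Euler-characteristic count only bounds the number of cases, whereas showing that each case is realized by a single diagram requires this monodromy analysis.
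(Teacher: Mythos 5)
This statement is not proved in the paper at all: it is imported from Leung--Symington \cite{LS} (with \cite{O11} cited for the analogue with nondegenerate singularities), so your attempt can only be compared with the argument in \cite{LS}, not with anything in this text. Your skeleton does follow the same general strategy as that literature: restrict the topology of the base, use the Euler-characteristic identity $\chi(M) = V + n$ (your stratification argument for it --- $T^2$-fibers and $S^1$-fibers contribute $0$, point fibers over the $V$ corners and pinched-torus fibers over the $n$ nodes contribute $1$ each --- is correct), conclude $V+n=3$, and realize all cases by nodal trades starting from the Delzant triangle. That part is sound.

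There are, however, two genuine gaps. The small one: \enquote{$\mathbb{CP}^2$ is simply connected, hence the base is a disk} is false as stated, because $S^2$ is also simply connected and is a legitimate base topology (the K3 surface admits an almost toric fibration over $S^2$). The surjection $\pi_1(M)\to\pi_1(B)$ only reduces the list to the disk and the sphere; the sphere must be excluded separately, e.g.\ by noting that for a closed base $\chi(M)$ equals the number of nodes, and an integral-affine $S^2$ with nodes forces $n=24$ (equivalently, by the classification in \cite{LS}, closed bases only carry K3, the Enriques surface, or torus bundles, none of which is $\mathbb{CP}^2$). The serious one is the step you yourself defer: everything you prove yields only \enquote{at most four pairs $(V,n)$, each realized}, whereas the theorem asserts \emph{precisely} four bases, i.e.\ uniqueness of the base for each $n$. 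That uniqueness --- your untrading claim --- is the actual mathematical content, and as written it is a plan rather than a proof: you would need to show (i) every node has an eigenray reaching the boundary (visibility), which is a nontrivial monodromy-control statement since a ray can hit another node or wind under the affine holonomy; (ii) the reverse trade always produces a Delzant corner so that the induction terminates at a genuine toric base; and (iii) uniqueness for fixed $n$ is not spoiled by nodal slides, i.e.\ bases differing by the affine position of a node along its eigenline are identified under the equivalence the theorem implicitly uses. Until (i)--(iii) are supplied, the proposal does not establish the stated theorem.
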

In fact, as it is shown by Oshemkov \cite{O11}, this theorem also holds for Lagrangian fibrations with non-degenerate singularities.

Note that the first two diagrams also arise in mechanics as iso-energy
diagrams in the irreducible integrable problem of motion of the Kowalevski gyrostat in a double force field \cite{Kh2008, KhHMJ}.

Recall that isolated critical values are the images of focus-focus singular points. One can observe that a base is uniquely determined by the number of focus-focus points.

For the first base (case \ref{fig:four_bases_0}) containing no focus-focus points, the fibration $\pi$ is just a trivial torus bundle over a disk.

For only one focus-focus point (case \ref{fig:four_bases_1}), the topology of $\pi$ is also known: the base $B_0$ is a punctured disk and the structure of the corresponding torus bundle is determined by a single matrix. It is well known (see, for example, \cite{LU1, M, Z1997, BO}) that this matrix is conjugate 
to the following one 
\begin{equation}\label{eq1:focus_monodromy}
\displaystyle{
\begin{pmatrix}
1 & 1\\
0 & 1
\end{pmatrix}.
}
\end{equation}
In this paper, we consider the remaining cases of two and three focus-focus points.

\section{Background:\,toric and almost toric fibration}

A \emph{Lagrangian torus fibration} on a symplectic 4-manifold $(M,\omega)$ is a smooth map $\pi\colon M \mapsto B$ to a space of dimension 2 such that the preimages of regular values of $\pi$ are Lagrangian tori (half dimensional tori on which the symplectic structure vanishes). The projection map $\pi$ also is called the \emph{moment map}.

An \emph{almost toric fibration} on $M$ is a Lagrangian torus fibration having only elliptic or focus-focus singularities, i.e. any point of $M$ has a symplectic neighbourhood ($\omega = \sum_i \text{d}\,p_i \wedge \text{d}\,q_i$) in which the projection map $\pi$ has one of the following forms:
$$
\left.
\begin{aligned}
\pi(p,q)
&= (p_1,p_2),\quad \text{\emph{regular point,}}\\
\pi(p,q)
&= (p_1,p_2^2 + q_2^2),\quad \text{\emph{elliptic rank 1,}}\\
\pi(p,q)
&= (p_1^2 + q_1^2, p_2^2 + q_2^2),\quad \text{\emph{elliptic rank 0,}}\\
\pi(p,q)
&= (p_1 q_1 + p_2 q_2, p_1 q_2 - p_2 q_1),\quad \text{\emph{focus-focus.}}
\end{aligned}
\right.
$$
An \emph{almost toric manifold} is a closed symplectic 4-manifold equipped with an almost toric fibration. The list of all almost toric 4-\emph{manifolds} can be found in \cite{LS}. However the classification problem for almost toric \emph{fibrations} up to a fiberwise symplectomorphism is still unsolved.

In symplectic geometry, a \emph{toric 4-manifold} is a closed symplectic 4-manifold with an effective Hamiltonian torus action; the torus action generates a Lagrangian torus fibration that is called a \emph{toric fibration}. Any singular point of a torus fibration is an elliptic type point. Toric fibrations are completely classified by Delzant's theorem (see, for example, \cite{Delzant1988, cdS}): the total space, symplectic structure and torus action are completely determined by the polytope in $\mathbb{R}^2$ that is the image of the moment map.

Let $\mathbb{T}^2 = \left\{(\varphi_1, \varphi_2):\varphi_i\,\text{mod}\,2\pi\right\}$ be a torus acting on $\mathbb{CP}^2$ by
$$
(\varphi_1,\varphi_2)[z_1:z_2:z_3] \mapsto [z_1 e^{i \varphi_1}:z_2 e^{i \varphi_2}:z_3].
$$
It can be easily checked that this action is Hamiltonian; the image of the moment map is shown in Figure \ref{fig:four_bases_0}.

Now we shall give an example of an almost toric fibration on $\mathbb{CP}^2$ that is not toric (even locally). Let $M = \mathbb{C}^3$, $\omega = \frac{i}{2} \sum_k \text{d}\,z_k \wedge \text{d}\,\bar{z}_k$ be a symplectic vector space. Consider the function $F = \sum_k |z_k|^2$ that is invariant under the following Hamiltonian $S^1$-action
$$
\displaystyle{ [z_1:z_2:z_3] \mapsto e^{it}[z_1:z_2:z_3]. }
$$
The submanifold $F^{-1}(1)$ is diffeomorphic to $S^5$;
the quotient space $S^5/S^1 = \mathbb{CP}^2$ has the induced symplectic structure.
Let us consider all quadratic functions on $\mathbb{C}^3$ of the form $\sum a_{ij}z_i \bar{z}_j$ which are invariant under this $S^1$-action. These functions generate the algebra $u(3)$ and generators have the form
$$
\displaystyle{h_1 = z_1 \bar{z}_1,\quad h_2 = z_2 \bar{z}_2,\quad h_3 = z_3 \bar{z}_3,}
$$
$$
\displaystyle{f_1 = z_2 \bar{z}_3 + z_3 \bar{z}_2,\quad f_2 = z_1 \bar{z}_3 + z_3 \bar{z}_1,\quad f_3 = z_1 \bar{z}_2 + z_2 \bar{z}_1,}
$$
$$
\displaystyle{
m_1 = z_2 \bar{z}_3 - z_3 \bar{z}_2,\quad 
m_2 = z_3 \bar{z}_1 - z_1 \bar{z}_3,\quad
m_3 = z_1 \bar{z}_2 - z_2 \bar{z}_1.
}
$$
It can be shown by direct calculation that the pair of functions
$$
\displaystyle{ H = h_2 - h_3,\quad G = f_3 f_2 - m_3 m_2 }
$$
defines an almost toric fibration; the image of the moment map $(H,G)\colon \mathbb{CP}^2 \mapsto \mathbb{R}^2$ shown in Figure \ref{fig:four_bases_1}.

This fibration has a point of focus-focus type. This implies that it cannot be toric. Nevertheless it admits the following Hamiltonian fiberwise $S^1$-action
$$
\displaystyle{[z_1:z_2:z_3] \mapsto [z_1:z_2 e^{i\psi}:z_3 e^{-i\psi}].}
$$
If there exists a fiberwise Hamiltonian $S^1$-action on a Lagrangian fibration, then this fibration is called \emph{semi-toric}. Semi-toric fibrations are classified by Pelayo and V\~u Ng\d{o}c \cite{P}. Note that in both examples considered above, the fibrations are semi-toric. However, we shall see that Lagrangian fibrations having two or three focus-focus points (cases \ref{fig:four_bases_2} and \ref{fig:four_bases_3}) cannot be semi-toric.

If we remove all singular fibers from a Lagrangian fibration, then what remains is a Lagrangian torus bundle. Following Duistermaat \cite{D}, we consider the \emph{monodromy} of a Lagrangian bundle. Let $\pi\colon M\mapsto B$ be a Lagrangian torus bundle and let $\gamma$ be a simple closed path in $B$. The preimage $Q = \pi^{-1}(\gamma)$ is a torus bundle over the circle. Clearly, $Q$ can be represented as the result of identification of the boundary tori of the 3-cylinder $\mathbb{T}^2 \times [0,1]$ by some homeomorphism.
This gluing homeomorphism is uniquely defined (up to an isotopy) by an integer unimodular matrix that is called the \emph{monodromy matrix}. This matrix depends of course on the choice of a basis on the tori, but its conjugacy class is well defined and is a complete invariant of the bundle $\pi\colon Q \mapsto \gamma$. In this paper we are mainly interested in the topology of Lagrangian bundles obtained from Lagrangian fibrations on $\mathbb{CP}^2$. 
Since their topology is completely determined by the monodromy, the problem of description of such bundles boils down to the description of possible monodromies.

\section{Description of the monodromy}

The aim of this paper is to prove the following statement.
\begin{thm}
The regular part of an almost toric fibration 
on the complex projective plane is determined by its base up to a fiber preserving homeomorphism; the monodromy is shown in Figure \ref{fig:mtr}.
\begin{figure}[h]
        \centering
        \begin{subfigure}[b]{0.4\textwidth}
                \includegraphics[width=\textwidth]{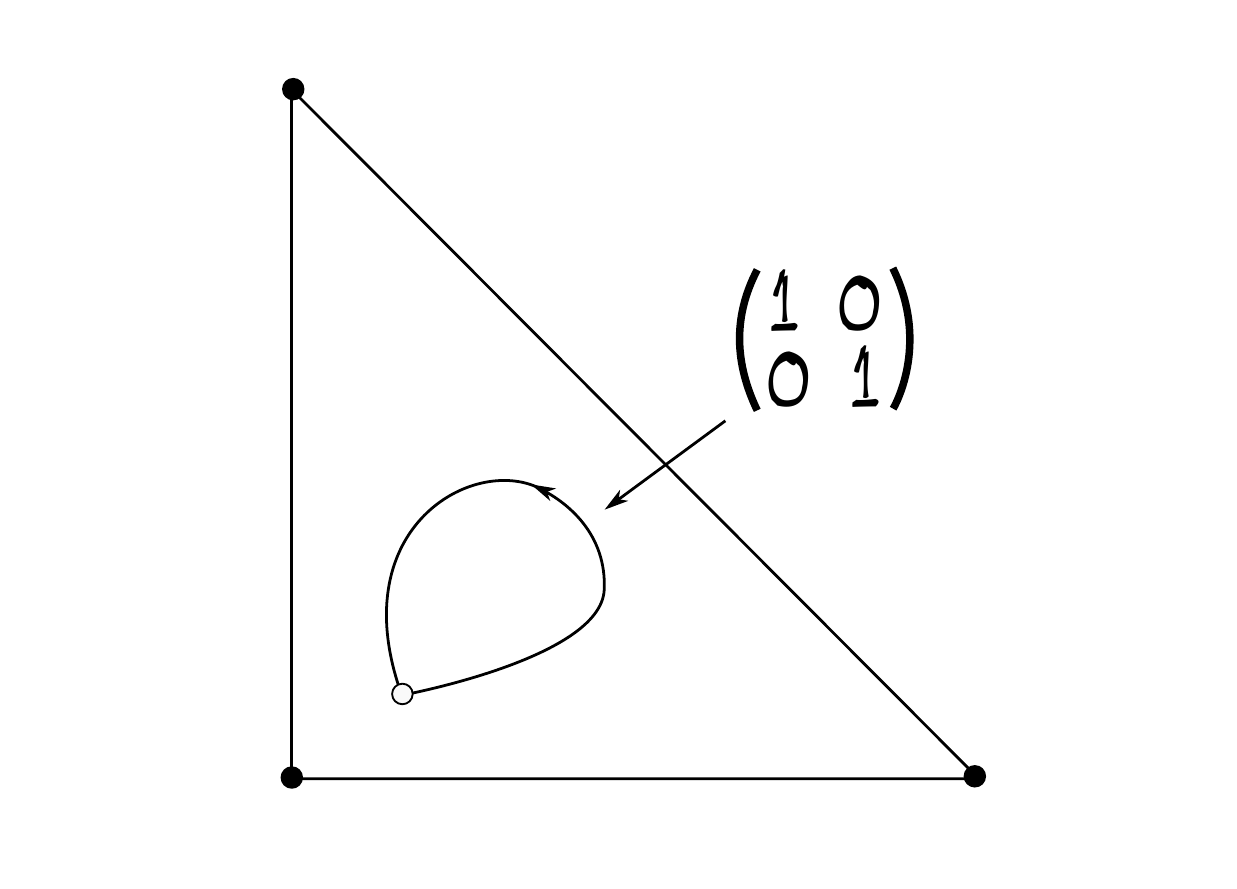}
                \caption{}

        \end{subfigure}\qquad
        \begin{subfigure}[b]{0.4\textwidth}
                \includegraphics[width=\textwidth]{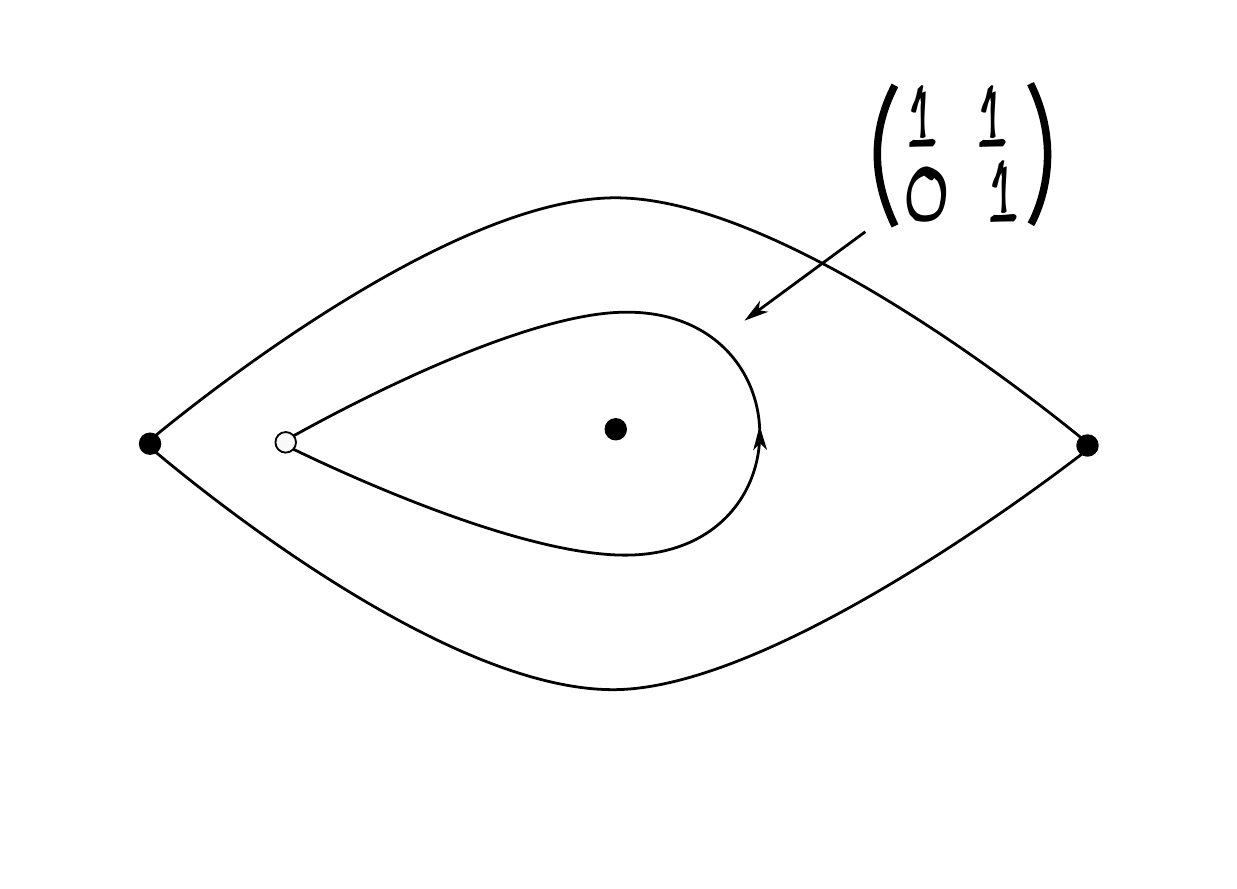}
                \caption{}

        \end{subfigure}\\

        \begin{subfigure}[b]{0.4\textwidth}
                \includegraphics[width=\textwidth]{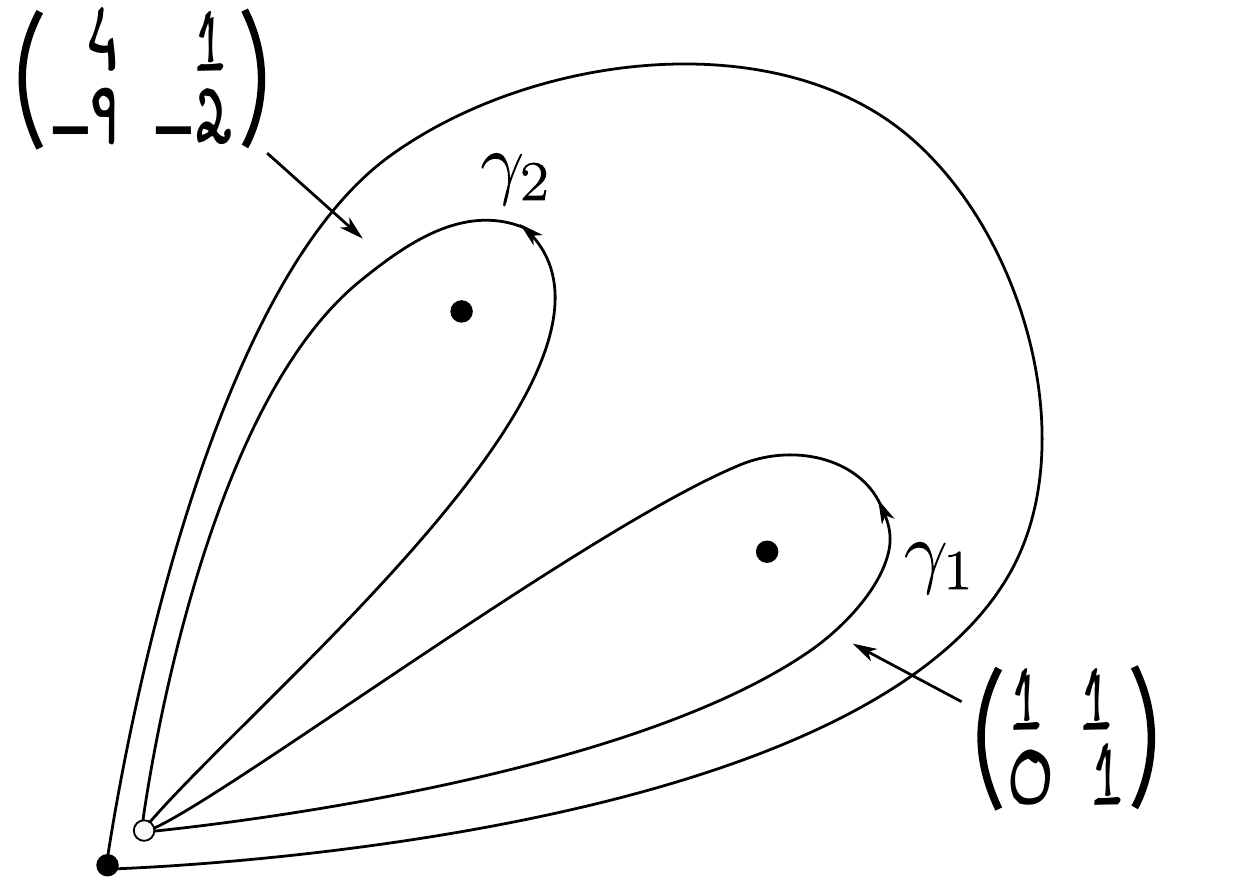}
                \caption{}

        \end{subfigure}\qquad %
        ~ 
        \begin{subfigure}[b]{0.5\textwidth}
                \includegraphics[width=\textwidth]{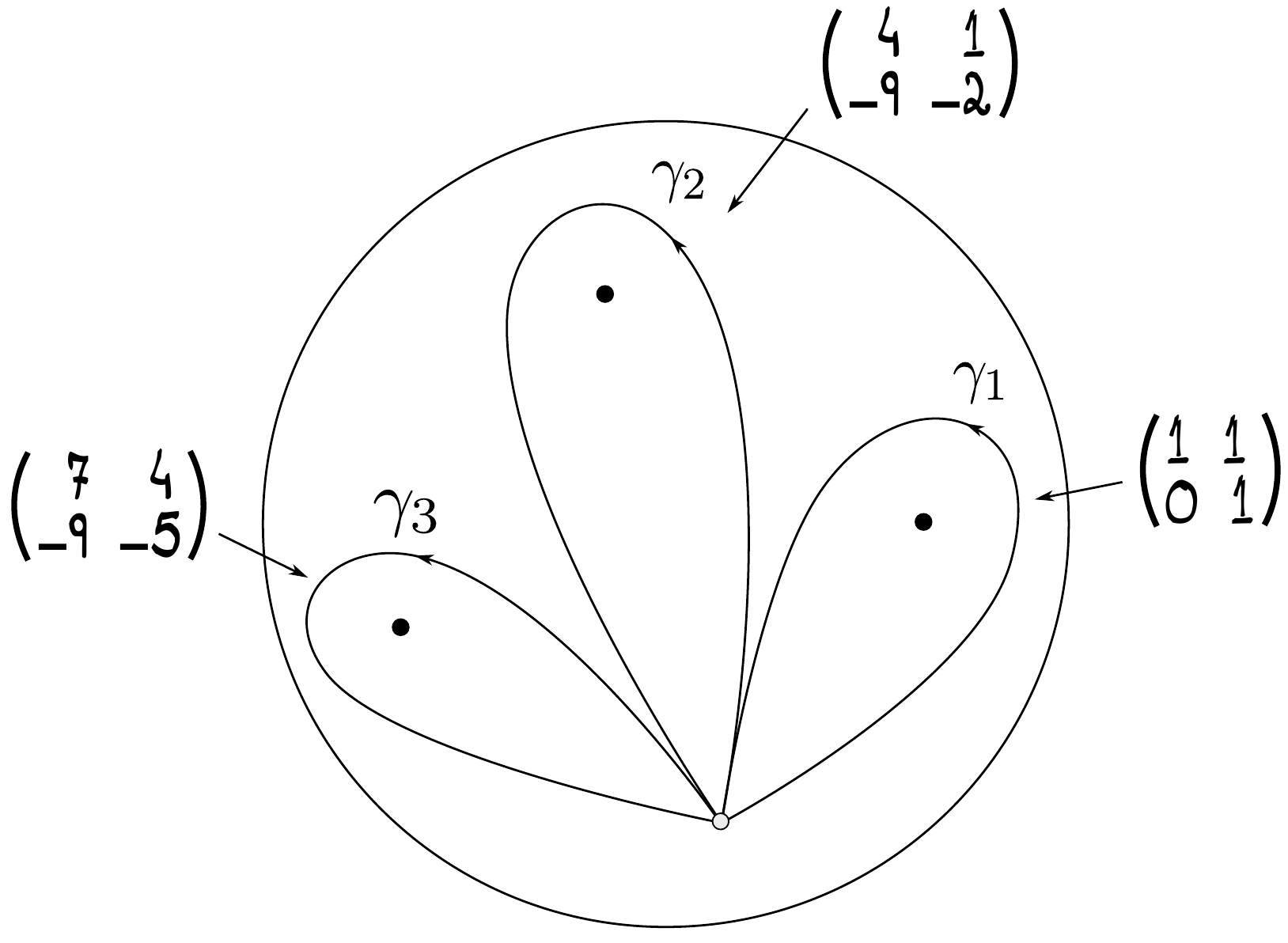}
                \caption{}
        \end{subfigure}
        \caption{Monodromy of fibrations on $\mathbb{CP}^2$.}\label{fig:mtr}
\end{figure}
\end{thm}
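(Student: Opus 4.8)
The plan is to reduce the statement to the computation of a single algebraic invariant — the monodromy representation — and then to determine this representation explicitly for each of the four bases. First I would recall that a $\mathbb{T}^2$-bundle over a base $B_0$ is classified, up to fiber-preserving homeomorphism, by its monodromy homomorphism $\rho\colon \pi_1(B_0)\to \mathrm{MCG}(\mathbb{T}^2)=SL(2,\mathbb{Z})$ taken up to conjugation, since here $B_0$ is homotopy equivalent to a wedge of circles and is therefore aspherical. Thus the first assertion of the theorem (rigidity up to fiber-preserving homeomorphism) will follow automatically once I show that $\rho$ depends, up to conjugacy, only on the base. By the description of the bases in Figure \ref{fig:four_bases}, the regular part $B_0$ is the interior of the moment triangle of $\mathbb{CP}^2$ with $n$ interior points deleted, where $n\in\{0,1,2,3\}$ is the number of focus-focus values; hence $\pi_1(B_0)$ is free on $n$ generators $\gamma_1,\dots,\gamma_n$, each a small loop encircling one node.

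Next I would pin down the local monodromy at each node. Because a neighbourhood of a single focus-focus value is modelled by case \ref{fig:four_bases_1}, the monodromy $\rho(\gamma_i)$ is conjugate to the parabolic matrix \eqref{eq1:focus_monodromy}; equivalently it is the transvection $T_{v_i}\colon x\mapsto x+\langle x,v_i\rangle\,v_i$ fixing the vanishing cycle $v_i\in H_1(\mathbb{T}^2;\mathbb{Z})$, where $\langle\cdot,\cdot\rangle$ denotes the intersection form. The direction of $v_i$ is not free: it is dictated by the integral-affine geometry of the nodal trade performed at the corresponding vertex of the Delzant triangle, namely $v_i$ is the primitive class determined by the two edges meeting at that vertex. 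I would compute these three directions once and for all from the standard triangle with vertices $(0,0),(1,0),(0,1)$, and use the $\mathbb{Z}/3$-symmetry of $\mathbb{CP}^2$ (cyclic permutation of homogeneous coordinates) to see that the unordered collection of directions — and hence the conjugacy class of $\rho$ — depends only on $n$, not on which corners are traded.

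The heart of the argument, and the step I expect to be the main obstacle, is to assemble these local data into a global representation written in one fixed basis of a reference fibre. The transvections $T_{v_i}$ are a priori expressed in different frames, and comparing them requires transporting a basis along the edges of the triangle; this transport is governed by the integral-affine corner matrices of the Delzant polytope, which conjugate successive local monodromies. The closing-up of the fibration over the elliptic boundary then imposes a single global relation among the transported transvections — the product of the monodromies around all the nodes must agree with the monodromy read off along the boundary of the moment triangle. Carrying out this bookkeeping carefully is the only genuinely computational part: it fixes the relative positions of the $v_i$ and thereby determines $\rho$ up to simultaneous $SL(2,\mathbb{Z})$-conjugation, yielding precisely the matrices displayed in Figure \ref{fig:mtr}.

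Finally I would record two consequences. Since $\rho$ is determined up to conjugacy by $n$, the classification up to fiber-preserving homeomorphism claimed in the theorem follows from the clutching description recalled in the first paragraph. Moreover, for $n=2$ and $n=3$ the resulting transvections have no common fixed line in $H_1(\mathbb{T}^2;\mathbb{Q})$, so the monodromy group fixes no class of the fibre; this is the obstruction promised in the text to a global fiberwise Hamiltonian $S^1$-action, showing that the bases \ref{fig:four_bases_2} and \ref{fig:four_bases_3} are not semi-toric.
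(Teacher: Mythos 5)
Your reduction of the theorem to the conjugacy class of the monodromy representation $\rho\colon\pi_1(B_0)\to SL(2,\mathbb{Z})$, and the observation that each local monodromy is a transvection along a vanishing cycle $v_i$, agree with the paper's starting point. The genuine gap comes immediately after, when you assert that the direction of $v_i$ is ``dictated by the integral-affine geometry of the nodal trade performed at the corresponding vertex of the Delzant triangle.'' This presupposes that the given fibration \emph{is} (fiberwise equivalent to) the model obtained from the standard toric fibration by nodal trades. But the theorem concerns an arbitrary almost toric fibration, and, as the paper emphasizes, the classification of almost toric \emph{fibrations} up to fiberwise equivalence is open; the cited Leung--Symington theorem only lists the four possible bases. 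Knowing that the base looks like Figure \ref{fig:four_bases_2} or \ref{fig:four_bases_3} does not by itself determine the affine holonomy or the vanishing cycles of the fibration at hand, so your plan computes the monodromy of one model per base and silently assumes every fibration agrees with it --- which is essentially the statement to be proven. Repairing this would require a much stronger input than what you cite: the classification of the bases as integral affine manifolds (up to nodal slides) together with Duistermaat's theorem that the affine structure determines the torus-bundle monodromy.

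The paper closes this gap with different tools, none of which appear in your plan. First, the boundary monodromy is pinned down for an \emph{arbitrary} fibration by Oshemkov's theorem that the homology class of the singular set depends only on $(M,\omega)$: the preimage of the boundary is a symplectic nodal sphere (two focus-focus points) or torus (three focus-focus points) in the anticanonical class, of self-intersection $9$, forcing the boundary monodromy to be $\begin{pmatrix}-7&-1\\1&0\end{pmatrix}$, resp.\ $\begin{pmatrix}1&0\\9&1\end{pmatrix}$, in a canonical basis. Second, the signs of the local transvections are equalized by the Cushman--V\~u Ng\d{o}c theorem. Third --- and this is the issue your plan leaves invisible because you only ever see one model --- the factorization of the boundary matrix into two or three such transvections is far from unique: the solutions form infinite families (integer points on a conic for $n=2$, Markov's equation $d_1^2+d_2^2+d_3^2=3\,d_1d_2d_3$ for $n=3$), and the uniqueness claim of the theorem requires showing that all these solutions are carried into one another by changes of the path system realized by homeomorphisms of the punctured disk, e.g.\ $(\gamma_1,\gamma_2)\mapsto(\gamma_2,\gamma_2\gamma_1\gamma_2^{-1})$ and the Dehn twists inducing the Markov-tree symmetries. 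Your ``bookkeeping'' step, even if carried out, would only reproduce the matrices of the model; it would not rule out the other factorizations, nor identify them with the model one. (Your closing remark that two or three transvections with non-proportional vanishing cycles admit no common fixed class, hence no semi-toric structure, is correct and matches the paper's remark, but it is peripheral to the theorem.)
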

\begin{rem}
One may use Zung's approach \cite{Z2003} to prove that not only the regular part, but also the whole fibration is uniquely determined by its base.
\end{rem}

Let us divide the proof into several propositions.

Assume that $\pi \colon M \mapsto \mathbb{CP}^2$ is an almost toric fibration that contains two points of focus-focus type. Let $P \in B_0$ be a regular value of $\pi$, and let $\gamma$ be a closed simple path starting and ending at $P$. Suppose the path $\gamma$ goes around both focus-focus points\footnote{Such path $\gamma$ is unique up to orientation.}. For each focus-focus point consider a path $\gamma_i,\,i = 1,2$ going around this point, but not both points together. These paths may be chosen in such a way that $\gamma_1 \gamma_2 = \gamma$, see Figure \ref{fig:mtr}. 
By $M_1$ and $M_2$ denote the monodromy matrices corresponding to the paths $\gamma_1$ and $\gamma_2$, respectively.
\begin{prop}\label{thm:prop_2pts}
There exists a basis of cycles on the torus $\pi^{-1}(P)$ such that the monodromy matrices take one of the two following forms: 

\begin{enumerate}
\item $\displaystyle{ 
M_{1}^{(1)} = \begin{pmatrix} 3 & 1 \\ -4 & -1 \end{pmatrix}, 
\quad 
M_{2}^{(1)} = \begin{pmatrix} 3 & 4 \\ -1 & -1 \end{pmatrix}
},
$

\item $\displaystyle{ 
M_{1}^{(2)} = \begin{pmatrix} 6 & 1 \\ -25 & -4 \end{pmatrix},\quad 
M_{2}^{(2)} = \begin{pmatrix} 3 & 1 \\ -4 & -1 \end{pmatrix} 
}.$\\
Furthermore, the transformation of paths $(\gamma_1, \gamma_2) \mapsto~(\gamma_2, \gamma_2 \gamma_1 \gamma_{2}^{-1})$ takes the latter pair to the former one.
\end{enumerate}
Moreover, such a basis is unique up to a change of orientation of both cycles. 
\end{prop}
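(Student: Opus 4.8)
The plan is to reduce the whole statement to an elementary Diophantine classification of pairs of primitive integer vectors, the only genuine geometric input being the local structure of the two focus--focus fibres together with one global constraint coming from $\mathbb{CP}^2$.

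First I would record the local picture. As recalled in \eqref{eq1:focus_monodromy}, the monodromy around a single focus--focus point is conjugate to $\begin{pmatrix}1&1\\0&1\end{pmatrix}$; hence each $M_i$ is a unipotent transvection $T_{a_i}$ (a Dehn twist) about a primitive \emph{vanishing cycle} $a_i \in H_1(\pi^{-1}(P);\mathbb{Z}) \cong \mathbb{Z}^2$, and the two twists have the same chirality, since both singular fibres carry the standard positively oriented focus--focus monodromy. Writing $\langle a,b\rangle = \det(a,b)$ for the intersection pairing on the fibre, a direct expansion of $T_{a_1}T_{a_2}$ yields the identity
\begin{equation*}
\operatorname{tr}(M_1 M_2) = 2 - \langle a_1, a_2\rangle^2 ,
\end{equation*}
which is independent of the common chirality. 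Thus, once the intersection number $\langle a_1,a_2\rangle$ is known, the conjugacy class of the total monodromy $M = M_1 M_2$ around $\gamma$ is pinned down, and (as explained below) so is the entire pair.

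The crux is to show that $|\langle a_1, a_2\rangle| = 3$. I would obtain this from the base itself: by the cited classification the two--focus base (Figure~\ref{fig:four_bases_2}) is the one produced from the moment triangle of $\mathbb{CP}^2$ by performing nodal trades at two of its three corners. The vanishing cycle created at a traded corner is, up to the $GL(2,\mathbb{Z})$--action, the primitive sum of the two outgoing edge vectors at that corner, and for the $\mathbb{CP}^2$ triangle the three resulting vectors have pairwise determinant $\pm 3$; restricting to the two traded corners gives $|\langle a_1,a_2\rangle| = 3$, equivalently $\operatorname{tr} M = -7$. Equivalently, pushing $\gamma$ out toward the boundary so that it encloses only the single surviving elliptic corner, the requirement that the fibration close up over that corner into the smooth manifold $\mathbb{CP}^2$ forces $M$ into this hyperbolic class. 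This is the step I expect to be the main obstacle: the integer $3$ is exactly the input that distinguishes $\mathbb{CP}^2$ from other rational surfaces, so it must be extracted from the global geometry of the base and cannot be read off from the two focus--focus points in isolation.

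With $|\langle a_1,a_2\rangle| = 3$ in hand, the classification becomes routine. After orienting so that $\langle a_1,a_2\rangle = 3$, a change of basis in $GL(2,\mathbb{Z})$ normalizes $a_1 = (1,0)$ and $a_2 = (p,3)$, and primitivity of $a_2$ forces $p \equiv \pm 1 \pmod 3$; the residual freedom fixing $a_1$ reduces $p$ to the two values $p \equiv 1$ and $p \equiv -1$, and these are precisely the two configurations of the statement, reproducing the pairs $(M_1^{(1)}, M_2^{(1)})$ and $(M_1^{(2)}, M_2^{(2)})$. A one--line matrix computation then confirms that the Hurwitz move $(\gamma_1,\gamma_2) \mapsto (\gamma_2, \gamma_2\gamma_1\gamma_2^{-1})$, i.e. $(M_1, M_2)\mapsto (M_2, M_2 M_1 M_2^{-1})$, carries the second pair to the first, so the two forms describe the same monodromy in different factorizations. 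Finally, since $a_1$ and $a_2$ are linearly independent, any basis change preserving both transvections must act as $a_i \mapsto \pm a_i$; the mixed--sign reflections fail to be integral for the pair at hand, leaving only $\pm I$ and giving the asserted uniqueness of the normalizing basis up to simultaneous reversal of both cycles.
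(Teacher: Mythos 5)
Your algebraic reduction is sound, and in places cleaner than the paper's: the identity $\operatorname{tr}(T_{a_1}T_{a_2}) = 2 - \langle a_1,a_2\rangle^2$ for two transvections of equal chirality is correct; your normalization $a_1=(1,0)$, $a_2=(p,3)$ with $p$ determined mod $3$, the Hurwitz-move computation, and the integrality argument for uniqueness (a mixed-sign reflection would require $3\mid p$, contradicting primitivity) all check out. By contrast, the paper solves the system $M_2M_1=M$ explicitly, reducing it to the Pell-type equation $-\varepsilon(d_1^2+d_2^2)+3d_1d_2=1$, and then quotients by the centralizer action of $M$; your route bypasses that computation. Your equal-chirality assertion is the same input the paper draws from Cushman--V\~u Ng\d{o}c \cite{CS}, so that step is legitimate.

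The genuine gap is exactly the step you yourself flagged: $|\langle a_1,a_2\rangle|=3$ is never proved. Your main argument reads the vanishing cycles off the model obtained by two nodal trades on the Delzant triangle. But the Leung--Symington theorem quoted in the paper classifies the possible \emph{bases}, whereas Proposition~\ref{thm:prop_2pts} concerns an \emph{arbitrary} almost toric fibration on $\mathbb{CP}^2$ with two focus-focus points; assuming its vanishing cycles coincide with those of the standard nodal-trade model is essentially assuming the conclusion (were the fibration known to be fiberwise equivalent to that model, there would be nothing left to prove). Your fallback claim --- that ``the requirement that the fibration close up over that corner into the smooth manifold $\mathbb{CP}^2$'' forces the class of $M$ --- fails as stated: the two fibered solid tori over the boundary glue up to a smooth closed fibered $4$-manifold for \emph{every} value of the integer transition parameter $k$, so smoothness alone forces nothing. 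What pins down $k$ is the global topology of $\mathbb{CP}^2$, and this is precisely the paper's first step: the preimage of the boundary of the base is a symplectic sphere $S$ with one double point and $[S]^2 = 2+k$; by Oshemkov's theorem \cite[p.~63, Th.~30]{BO} the homology class of the singular set is independent of the choice of fibration, so it can be computed from the toric example, giving $[S]$ equal to three times the line class, hence $[S]^2=9$, $k=7$, and $\operatorname{tr} M=-7$, i.e. $\langle a_1,a_2\rangle^2=9$. Supplying this argument (or an equivalent one using the intersection form of $\mathbb{CP}^2$) is what your proof needs in order to be complete.
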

\begin{proof}
First we prove that there exists a basis such that the monodromy matrix corresponding to the path $\gamma$ has the form 
$$
\displaystyle{
M = \begin{pmatrix}
-7 & -1\\
1 & 0
\end{pmatrix}.
}
$$
Next we describe the set of all pairs $(M_1, M_2)$ satisfying
$$
\displaystyle{
M_2\,M_1 = M,
}
$$
where the matrices $M_i$ are conjugate to matrix \eqref{eq1:focus_monodromy}.

Finally we show that for any pair $(M_1, M_2)$ there exists a basis such that the pair takes one of the forms listed in the proposition.
\vspace{1cm}\\
\emph{1. Conjugacy class of $M$.}\\
Let us choose a convenient basis for each torus in a neighbourhood of an elliptic rank 0 point (see \cite{IGS, BO} for details on the topology of such singular points).

\begin{figure}
\centering
\includegraphics[width=0.3\textwidth]{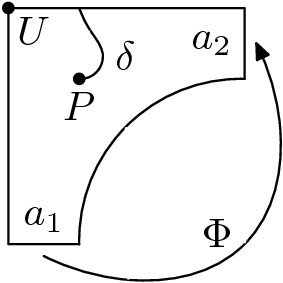}
\caption{\label{fig:elliptic} Neighbourhood of a rank 0 elliptic point.}
\end{figure}

Let $U$ be the image of a neighbourhood of an elliptic point under the map $\pi$, see Figure \ref{fig:elliptic}. Fix any point $P \in U$ and let $\delta$ be a path joining $P$ with the bifurcation diagram.
Denote by $\mathcal{A}$ the preimage $\pi^{-1}(\delta)$; $\mathcal{A}$ is a solid torus foliated into Lagrangian tori, i.e. it is a fibration generated by the function $f\colon D \times S^1 \mapsto \mathbb{C},\, f(z) = z \bar{z}$, where $z$ is a complex coordinate in the disk $D(z) = \{z|\,|z| \leq 1\}$; the fibration $\mathcal{A}$ is said to be the \emph{fibered solid torus}.
Since $\pi^{-1}(P)$ bounds the solid torus $\mathcal{A}$, its fundamental group has a \emph{marked cycle} $\theta$ that is contractible in $\mathcal{A}$.

Further, let $\delta_1$ and $\delta_2$ be paths joining the point $P$ to different edges of the bifurcation diagram; then it is possible to choose two cycles $(\theta_1, \theta_2)$ on the torus $\pi^{-1}(P)$ that are contractible in the corresponding solid torus $\mathcal{A}_1 = \pi^{-1}(\delta_1)$ and $\mathcal{A}_2 = \pi^{-1}(\delta_2)$, respectively.

Using the Eliasson theorem \cite{E, Mir}, it is easy to show that the intersection of these two cycles is a single point. Therefore the pair $(\theta_1, \theta_2)$ is a basis for $\pi^{-1}(P)$.
More precisely, it becomes a basis after we orient the cycles $\theta_1$, $\theta_2$.
The orientation is given in the following way. Consider the solid torus $\mathcal{A}_1$. The cycle $\theta_1$ is contractible in $\mathcal{A}_1$ and the cycle $\theta_2$ is homotopy equivalent to the boundary of the symplectic disk that is the preimage of the corresponding edge of the diagram. If we orient the disk by the symplectic form, we get the induced orientation on the boundary. Similarly, we can orient cycle $\theta_1$.
The basis $(\theta_1, \theta_2)$ is said to be the \emph{canonical basis}; it is well defined up to permutation of the cycles.

Now let us consider the boundary segments $a_1$ and $a_2$ shown in Figure \ref{fig:elliptic} and the corresponding fibrations $\pi^{-1}(a_1)$ and $\pi^{-1}(a_2)$; the spaces $\pi^{-1}(a_i)$ are fibered solid tori. Let us glue these fibrations by a homeomorphism. In order to describe all possible gluings up to a fiberwise isotopy, we choose a basis on the boundary torus of each fibration $\pi^{-1}(a_i)$ in the following way. The first cycle $\lambda_i$ is the meridian of the solid torus, i.e. this cycle is contractible in this solid torus. The second cycle $\mu_i$ is chosen in such a way that the pair $(\lambda_i, \mu_i)$ is a basis for the boundary torus.

Such a basis is not unique. However the canonical basis for each torus in $\pi^{-1}(U)$ has already been constructed. Set
$$
\displaystyle{ \lambda_1 = \theta_{1}, \mu_1 = \theta_2\  \text{\normalfont{and}}\ \lambda_2 = \theta_{2}, \mu_2 = \theta_1.}
$$

It is not hard to prove that the gluing map is completely defined up to a fiberwise isotopy by the mapping of a single torus, i.e. it is defined by the transition matrix from $(\lambda_1, \mu_1)$ to $(\lambda_2, \mu_2)$. Notice that we have the following additional restrictions on this matrix: 
\begin{itemize}
\item We should be careful about the gluing of 
the critical set of our Lagrangian fibration.
The preimage of the edges of the diagram are symplectic disks. We need to glue these disks along the boundary circles; the result of the gluing is the \emph{symplectic} sphere with a double point. Therefore the gluing map must reverse the orientations of the boundary circles. It follows that this map reverses the orientations of the singular fibers of $\pi^{-1}(a_1)$ and $\pi^{-1}(a_2)$;

\item The gluing map takes the cycle that is contractible in $\pi^{-1}(a_1)$ to the contractible cycle in $\pi^{-1}(a_2)$.
In fact, this map preserves the orientation of the contractible cycle.
\end{itemize}
The result is that the transition matrix has the form
$$
\displaystyle{
\begin{pmatrix}
\lambda_2\\
\mu_2
\end{pmatrix} = 
\begin{pmatrix}
1 & 0\\
-k & -1
\end{pmatrix}
\begin{pmatrix}
\lambda_1\\
\mu_1
\end{pmatrix}
}.
$$

The number $k$ has a clear geometric meaning. Let $S$ be the symplectic sphere that is the set of singular points of our Lagrangian fibration, then 
$$
\displaystyle{ [S]^2 = 2 + k,}
$$
where $[S]^2$ is the self-intersection index of $S$. If we orient the space by $\omega^2$, where $\omega$ is the symplectic form, then we may assume that $[S]^2$ is positive.

Oshemkov \cite[p.~63, Th.~30]{BO} proved that the homology class of the singular set of a Lagrangian fibration does not depend on the particular choice of a fibration, but only on the topology and symplectic structure of the ambient space. Considering the example of a toric fibration on $\mathbb{CP}^2$ above, we get $k = 7$.

So, we conclude that there exists a basis on the torus $\pi^{-1}(P)$ such that the monodromy matrix $M$ takes the form
$$
\displaystyle{
M = 
\begin{pmatrix}
0 & 1\\
1 & 0
\end{pmatrix}
\begin{pmatrix}
1 & 0\\
-7 & -1
\end{pmatrix} = 
\begin{pmatrix}
-7 & -1\\
1 & 0
\end{pmatrix}.
}
$$
\vspace{1cm}\\
\emph{2. Solutions of the equation $M_2 M_1 = M$.}\\
Since the matrices $M_i$ are conjugate to \eqref{eq1:focus_monodromy}, it follows that
$$
\displaystyle{
M_i = 
\begin{pmatrix}
1 & 0\\
0 & 1
\end{pmatrix} + \varepsilon_i \begin{pmatrix}
c_i d_i & d_i^2\\
-c_i^2 & -c_i d_i
\end{pmatrix},\quad \varepsilon_i = \pm 1,
}
$$
where the integers $c_i$ and $d_i$ are coprime; here $\varepsilon_i$ is the value of the determinant of a transition matrix to a basis in which the monodromy matrix $M_i$ takes form \eqref{eq1:focus_monodromy}.

Now, note that any focus-focus singularity gives rise to an orientation of the total space. Namely, let $\gamma$ be a small circle around the focus-focus point. Let us fix an orientation of the base, i.e. an orientation of our circle $\gamma$. Then one can orient the fibers by a basis of cycles such that the monodromy matrix corresponding to $\gamma$ equals \eqref{eq1:focus_monodromy}.
It is easy to prove that all bases such that the matrix is equal to \eqref{eq1:focus_monodromy}
have the same orientation.
Furthermore, if we change the orientation of $\gamma$, then we get the reversed orientation for the fibers.
The total space can be oriented by the sum of the orientations of the base and the fibers.
It was shown in \cite{CS} that all focus-focus points give rise to the same orientation. 
Consequently,
$$
\displaystyle{ \varepsilon_1 = \varepsilon_2 = \varepsilon = \pm 1.}
$$
This consideration implies that any solution of the system of equations 
$$
\displaystyle{
M_2 M_1 = \begin{pmatrix}
-7 & -1\\
1 & 0
\end{pmatrix}
}
$$
is either a solution to the following system
\begin{equation}\label{eq2:integer_system}
\displaystyle{
\left\{
\begin{aligned}
& -\varepsilon (c_1^2 + c_2^2) + 3\,c_1 c_2 = 1 \\
& -\varepsilon (d_1^2 + d_2^2) + 3\,d_1 d_2 = 1 \\
& -\varepsilon (c_1 d_1 + c_2 d_2) + \frac{3}{2} (c_1 d_2 + c_2 d_1) = \frac{7}{2} \\
& c_1 d_2 - c_2 d_1 = 3,
\end{aligned}
\right.
}
\end{equation}
or it can be transformed to be it by means of the transformation $(c_2,d_2) \mapsto (-c_2, -d_2)$; note that this transformation does not change $M_2$.

To solve system \eqref{eq2:integer_system}, let us consider the plane $\mathbb{R}^2$ with the inner product defined by the matrix
$$
\displaystyle{
\begin{pmatrix}
-\varepsilon & \dfrac{3}{2}\\
\dfrac{3}{2} & -\varepsilon
\end{pmatrix}
}.
$$
In terms of this inner product, we need to describe all unit integer vectors $(c_1, c_2)$ and $(d_1, d_2)$ with given area of the parallelogram spanned by these vectors and given angle between them.

The reader will have no difficulty in showing that for any pair $(d_1, d_2)$ satisfying \eqref{eq2:integer_system} there exists a unique pair $(c_1, c_2)$ that satisfies system \eqref{eq2:integer_system}: the solution is 
$$
\displaystyle{
c_1 = 8\,d_1 - 3\,\varepsilon \, d_2,\quad c_2 = -d_2 + 3\,\varepsilon\,d_1.
}
$$
Thus it remains to solve the equation
\begin{equation}\label{eq:integer_eq}
\displaystyle{ -\varepsilon\, (d_1^2 + d_2^2) + 3\, d_1 d_2 = 1.}
\end{equation}
In other words, we need to describe all integer points on the hyperbola.

Note that the set of the solutions for the case $\varepsilon = 1$ is mapped to the set of the solutions for $\varepsilon = -1$ by the following transformation 
\begin{equation}\label{eq:S}
\displaystyle{
\begin{pmatrix}
d_1\\
d_2
\end{pmatrix} \longmapsto
\begin{pmatrix}
2 & -1\\
-1 & 1
\end{pmatrix} \begin{pmatrix}
d_1\\
d_2
\end{pmatrix}
.}
\end{equation}
For this reason, it suffices to consider the case $\varepsilon = 1$.

The solutions of equation \eqref{eq:integer_eq} for $\varepsilon = 1$ have the form
$$
\displaystyle{\pm Z^n \boldsymbol{d}^{(i)},\quad Z = \begin{pmatrix}
21 & -8\\
8 & -3
\end{pmatrix},
}
$$
where
$$
\displaystyle{
\boldsymbol{d}^{(1)} = 
\begin{pmatrix}
1\\ 1
\end{pmatrix},\quad
\boldsymbol{d}^{(2)} = 
\begin{pmatrix}
1\\ 2
\end{pmatrix},\quad
\boldsymbol{d}^{(3)} = 
\begin{pmatrix}
2\\ 1
\end{pmatrix}
}
$$
and $n$ is an arbitrary integer; the matrix $Z$ generates the group of integral isometries.

Using this explicit description of solutions and transformation \eqref{eq:S}, one can check that for any solution of \eqref{eq:integer_eq} with $\varepsilon = -1$ at least one pair $(c_1,d_1)$ or $(c_2,d_2)$ is not coprime. At the same time, all solutions for $\varepsilon = 1$ are admissible.
\vspace{1cm}\\
\emph{3. Solutions are equivalent.}\\
Let us fix a basis for $\pi^{-1}(P)$ and move it along the path $\gamma$; this way, we get a new basis obtained by applying the monodromy matrix $M$.

We have already described all solutions of system \eqref{eq2:integer_system}. Now we need to find solutions that are conjugate by $M$.

It can be checked by direct calculation that if a vector $(d_1, d_2)$ corresponds to a pair $(M_1, M_2)$, then the vector
$$
\displaystyle{ P \begin{pmatrix} d_1 \\ d_2 \end{pmatrix},\quad P = \begin{pmatrix}
-8 & 3\\
-3 & 1
\end{pmatrix}
}
$$
corresponds to the pair $(M^{-1} M_1 M, M^{-1} M_2 M)$.

Using the following identity
$$\displaystyle{ P^{3} = -Z^{2} },$$
it is easy to show that any solution of \eqref{eq:integer_eq} can be expressed in the form 
$$
\displaystyle{
\pm P^n \begin{pmatrix}
1\\ 1
\end{pmatrix} \text{\normalfont{ or }} \pm P^n \begin{pmatrix}
1\\ 2
\end{pmatrix}\quad \text{\normalfont{ for some }} n \in \mathbb{Z}.
}
$$
Note that solutions for $\boldsymbol d$ and $-\boldsymbol d$ correspond to the same monodromy matrix.
Thus we have only two solutions up to the transformation $P$.
\end{proof}
\begin{cor}
The regular part of an almost toric fibration on $\mathbb{CP}^2$ with two focus-focus points is unique up to a fiber preserving homeomorphism.
\end{cor}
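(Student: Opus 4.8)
The plan is to deduce the Corollary from the Proposition together with the fact, recalled in Section~2, that a Lagrangian torus bundle is determined up to fiber-preserving homeomorphism by its monodromy. For a fibration with two focus-focus points the regular part $M_0=\pi^{-1}(B_0)$ is a torus bundle over $B_0$, and $B_0$ is an open disk with the two focus-focus values deleted. Hence $B_0$ is homotopy equivalent to a wedge of two circles and $\pi_1(B_0)$ is the free group $F_2=\langle\gamma_1,\gamma_2\rangle$. The monodromy representation $\rho\colon\pi_1(B_0)\to GL(2,\mathbb{Z})$ is then completely recorded by the pair $(M_1,M_2)=(\rho(\gamma_1),\rho(\gamma_2))$, and two such bundles are fiber-preserving homeomorphic precisely when their pairs agree modulo \emph{(i)} simultaneous conjugation in $GL(2,\mathbb{Z})$ (a change of basis on the reference fiber) and \emph{(ii)} the automorphisms of $F_2$ induced by self-homeomorphisms of $B_0$.

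Given two almost toric fibrations on $\mathbb{CP}^2$ with two focus-focus points, I would first apply the Proposition to each of them. Invoking equivalence \emph{(i)}, a change of basis brings each pair $(M_1,M_2)$ into one of the two normal forms $(M_1^{(1)},M_2^{(1)})$ or $(M_1^{(2)},M_2^{(2)})$ listed there. Thus the Corollary reduces to showing that these two normal forms describe the same bundle.

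For this, I would use the second assertion of the Proposition: the two normal forms are interchanged by the substitution $(\gamma_1,\gamma_2)\mapsto(\gamma_2,\gamma_2\gamma_1\gamma_2^{-1})$. The point is that this is exactly the automorphism of $F_2$ induced by the elementary half-twist braiding the two punctures of $B_0$. This braid is an orientation-preserving self-homeomorphism of $B_0$ supported away from the outer end; in particular it fixes the product loop $\gamma=\gamma_1\gamma_2$ up to conjugacy (one checks directly that $\gamma_2\cdot\gamma_2\gamma_1\gamma_2^{-1}$ is conjugate to $\gamma_1\gamma_2$), so the overall monodromy $M$ is unchanged. By equivalence \emph{(ii)} it therefore lifts to a fiber-preserving self-homeomorphism of $M_0$ carrying one normal form to the other, and the two cases define the same bundle.

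I expect the only delicate point to be the geometric realization in the last paragraph: identifying the algebraic substitution with a genuine self-homeomorphism of the twice-punctured base and verifying that it is compatible with the behaviour near the outer end, so that the monodromy $M$ --- and hence the bundle structure over a neighbourhood of the outer boundary --- is genuinely preserved. The classification of torus bundles by their monodromy, which underlies the whole argument, is standard and has already been used above.
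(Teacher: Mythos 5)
Your proof is correct and follows essentially the same route as the paper: reduce via Proposition \ref{thm:prop_2pts} to the two normal forms, then realize a puncture-exchanging self-homeomorphism of the twice-punctured disk and lift it to a fiber-preserving homeomorphism of the regular parts. The only cosmetic difference is that you invoke the half-twist substitution $(\gamma_1,\gamma_2)\mapsto(\gamma_2,\gamma_2\gamma_1\gamma_2^{-1})$ recorded in the Proposition, whereas the paper uses the plain swap $(\gamma_1,\gamma_2)\mapsto(\gamma_2,\gamma_1)$ together with the observation that $(M_1^{(1)},M_2^{(1)})$ is conjugate to $(M_2^{(2)},M_1^{(2)})$; these two automorphisms differ only by an inner automorphism of $\pi_1(B_0)$, i.e.\ by a change of basis on the reference fiber.
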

\begin{proof}
The regular part is a torus bundle over a twice punctured disk. Let $(\gamma_1,\gamma_2)$ be a pair of paths going around the punctures.
It follows from Proposition \ref{thm:prop_2pts} that there exists a basis such that the pair of the monodromy matrices corresponding to our paths has the form $(M_1^{(1)}, M_2^{(1)})$ or $(M_1^{(2)}, M_2^{(2)})$.

The pair $(M_1^{(1)}, M_2^{(1)})$ is not conjugate to $(M_1^{(2)}, M_2^{(2)})$, but the pairs $(M_1^{(1)}, M_2^{(1)})$ and $(M_2^{(2)}, M_1^{(2)})$ are conjugate. Suppose $f$ is a map such that it takes $(\gamma_1,\gamma_2)$ to $(\gamma_2,\gamma_1)$; then there exists a lift of $f$ being a fiberwise homeomorphism between the given two bundles.
\end{proof}
\begin{prop}
The regular part of an almost toric fibration on $\mathbb{CP}^2$ with three focus-focus points is unique up to a fiber preserving homeomorphism.
\end{prop}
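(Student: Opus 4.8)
The plan is to follow the same three-step scheme as in Proposition~\ref{thm:prop_2pts}. Write $M_1, M_2, M_3$ for the monodromy matrices around the three focus-focus points, ordered so that $\gamma_1 \gamma_2 \gamma_3 = \gamma$ encloses all three nodes, and set $M = M_3 M_2 M_1$. As before, each $M_i$ is conjugate to \eqref{eq1:focus_monodromy}, so
$$
M_i = \begin{pmatrix} 1 & 0 \\ 0 & 1 \end{pmatrix} + \varepsilon_i \begin{pmatrix} c_i d_i & d_i^2 \\ -c_i^2 & -c_i d_i \end{pmatrix}, \qquad \varepsilon_i = \pm 1,
$$
with $(c_i, d_i)$ coprime, and the orientation argument of \cite{CS} again forces $\varepsilon_1 = \varepsilon_2 = \varepsilon_3 = \varepsilon$.

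First I would determine the conjugacy class of $M$. The decisive difference from the two-point case is that the base now carries no elliptic vertex, so the gluing computation at a rank~$0$ point is unavailable. Instead I would argue that the total monodromy is trivial, $M = \mathrm{Id}$. The quickest route is an Euler-characteristic count: a torus fibration with only focus-focus fibers contributes $+1$ for each such fiber and $0$ elsewhere, and since $\chi(\mathbb{CP}^2) = 3$ matches exactly the number of nodes, the fibration possesses no further singular fibers and may be regarded as a fibration over $S^2$. In $\pi_1(S^2 \setminus \{3\ \text{points}\})$ the loop $\gamma_3 \gamma_2 \gamma_1$ is null-homotopic, whence $M_3 M_2 M_1 = \mathrm{Id}$.

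Second, I would solve $M_3 M_2 M_1 = \mathrm{Id}$ for the three vanishing directions $(c_i, d_i)$. Expanding the product and imposing the determinant-one and coprimality constraints, the system should reduce --- just as \eqref{eq2:integer_system} reduced to a single Pell-type equation --- to a relation on the mutual intersection numbers $a_{ij} = c_i d_j - c_j d_i$ of the vanishing cycles. I expect this relation to be the Markov equation, so that the admissible triples of directions are precisely the Markov triples, with the standard fibration corresponding to $(1,1,1)$; as in the two-point analysis I would check that only $\varepsilon = +1$ yields everywhere-coprime admissible solutions. In particular the solution set is infinite, reflecting the various nodal configurations realizing $\mathbb{CP}^2$.

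Third, I would show that all solutions give homeomorphic regular parts. The relevant symmetries are simultaneous conjugation by $M = \mathrm{Id}$ (change of fiber basis) together with the action of the mapping class group of the thrice-punctured sphere, generated by the braid moves $(\gamma_i, \gamma_{i+1}) \mapsto (\gamma_{i+1}, \gamma_{i+1} \gamma_i \gamma_{i+1}^{-1})$, which on triples of vanishing cycles are exactly the Markov mutations. It is important here that, although distinct Markov triples yield non-symplectomorphic fibrations (indeed distinct monotone central fibers), the Proposition only asserts uniqueness up to fiber-preserving \emph{homeomorphism}, and these braid moves \emph{are} realized by self-homeomorphisms of the base that lift fiberwise. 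The heart of the argument --- and the step I expect to be the main obstacle --- is to verify that the geometric braid action matches the algebraic mutations and is transitive on the solution set; transitivity itself follows from the connectivity of the Markov tree (every triple descends to $(1,1,1)$ by mutations), but identifying the two actions and checking that each move lifts to a fiber-preserving homeomorphism is the delicate point. Granting this, the monodromy data is unique up to the admissible equivalences, and the lifting argument of the Corollary produces the desired homeomorphism. As a cross-check, and an alternative to the mutation analysis, one may instead perform a single nodal trade turning one node back into an elliptic corner, reducing a three-node fibration to a two-node one whose regular part is already unique by the Corollary; the remaining difficulty there is to confirm that the corner--node trade is canonical and independent of the node chosen.
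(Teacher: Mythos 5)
Your overall scheme (fix the total monodromy $M$, solve $M_3M_2M_1=M$ for vanishing data, then realize the Markov mutations by braid moves on the base) is the same as the paper's, and your steps 2 and 3 correctly anticipate the actual argument: the solubility condition is indeed Markov's equation, and transitivity does come from the Markov tree combined with base homeomorphisms inducing the elementary symmetries. But your step 1 is wrong, and the error is fatal to the whole scheme. The base of the three-node fibration is not $S^2$ minus three points: it is a disk (Figure \ref{fig:four_bases_3}) whose boundary consists of rank-1 elliptic values, and the preimage of that boundary is a \emph{symplectic torus} (the anticanonical torus, of self-intersection $9$). Since a torus has Euler characteristic $0$, your count $\chi(\mathbb{CP}^2)=3=\#\{\text{nodes}\}$ cannot detect this singular set, so the conclusion ``no further singular fibers, hence a fibration over $S^2$'' does not follow. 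The correct total monodromy, obtained by the same gluing argument as in Proposition \ref{thm:prop_2pts} with $[S]^2=9$ in place of $7$, is $M=\left(\begin{smallmatrix}1 & 0\\ 9 & 1\end{smallmatrix}\right)$, not the identity.

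Moreover, $M=\mathrm{Id}$ is not merely unproved; it is impossible. Writing $M_i=I+\varepsilon N_i$ with $N_i=\left(\begin{smallmatrix}c_id_i & d_i^2\\ -c_i^2 & -c_id_i\end{smallmatrix}\right)$ and $N_i^2=0$, one computes $\operatorname{tr}(N_2N_1)=-(c_1d_2-c_2d_1)^2$, hence $\operatorname{tr}(M_2M_1)=2-(c_1d_2-c_2d_1)^2$. The relation $M_3M_2M_1=I$ forces $M_2M_1=M_3^{-1}$ to have trace $2$, so $c_1d_2=c_2d_1$, and coprimality gives $(c_2,d_2)=\pm(c_1,d_1)$, i.e.\ $M_2=M_1$; then $N_3=-2N_1$, which requires $d_3^2=-2d_1^2$, impossible for coprime integer pairs. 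So under your own hypotheses (all $\varepsilon_i$ equal) the equation $M_3M_2M_1=I$ has \emph{no} solutions, contradicting the existence of the fibration --- this is the $SL(2,\mathbb{Z})$ manifestation of the fact that an elliptic fibration over $S^2$ with only nodal fibers needs the number of nodes to be divisible by $12$, while you have only $3$. Your fallback suggestion (a nodal trade back to the two-node case) also does not prove the proposition: the trade produces a \emph{different} fibration whose regular part is a bundle over a twice-punctured disk, so uniqueness there says nothing directly about torus bundles over the thrice-punctured disk.
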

\begin{proof}
The proof is very similar to the proof of Proposition \ref{thm:prop_2pts}.

Let $\gamma$ be the simple path going along the boundary of the base. We claim that there exists a basis such that the monodromy matrix corresponding to $\gamma$ takes the form
$$
\displaystyle{ M = \begin{pmatrix} 1 & 0\\ 9 & 1 \end{pmatrix}. }
$$

This can be proved in a similar way as we have done in Proposition \ref{thm:prop_2pts}. The preimage of the boundary of the base is a symplectic torus; the self-intersection index of this torus equals 9. It follows easily that the matrix $M$ takes the form above in a suitable basis.

Suppose $\gamma_1$, $\gamma_2$ and $\gamma_3$ is a triple of paths going around the punctures and such that $\gamma_1 \gamma_2 \gamma_3 = \gamma$.

Let us describe pairs of integer vectors $(\boldsymbol c, \boldsymbol d)$ that satisfy
$$
\displaystyle{ M_3 M_2 M_1 = M,
}
$$
$$
\displaystyle{\quad M_i = E + \varepsilon_i \begin{pmatrix} c_i d_i & d_i^2\\ -c_i^2 & -c_i d_i \end{pmatrix},\ \varepsilon_1 = \varepsilon_2 = \varepsilon_3 =\varepsilon = \pm 1.}
$$
It can be shown that this system has no solutions in the case $\varepsilon = -1$; any solution of the system for $\varepsilon = 1$ is a solution of the following one
\begin{equation}\label{eq:vector_eq}
\displaystyle{
[\boldsymbol c, \boldsymbol d] = 3 \begin{pmatrix} d_1 \\ d_2 - 3\, d_1 d_3 \\ d_3 \end{pmatrix},
}
\end{equation}
or it can be obtained from a solution of \eqref{eq:vector_eq} by the transformation $(\boldsymbol c, \boldsymbol d) \mapsto (-\boldsymbol c, -\boldsymbol d)$; here $[\boldsymbol c, \boldsymbol d]$ is the vector product of $\boldsymbol c$ and  $\boldsymbol d$. Since the latter transformation preserves the form of the monodromy matrices, it suffices to consider solutions of \eqref{eq:vector_eq}.

Equation \eqref{eq:vector_eq} may be solved for $\boldsymbol c$ if and only if 
\begin{equation}\label{eq:markov}
\displaystyle{
\langle [\boldsymbol c, \boldsymbol d], \boldsymbol d \rangle = d_1^2 +d_2^2 + d_3^2 -3\, d_1 d_2 d_3 = 0.
}
\end{equation}
This well-known equation is called \emph{Markov's equation}; let us describe its solutions. 
First note that it is sufficient to find only positive (each $d_i > 0$) solutions. Indeed, the transformation $(c_i, d_i) \mapsto (-c_i, -d_i)$ preserves matrices $M_i$. Further, one can change the sign of two components of $\boldsymbol d$ using the symmetry of Markov's equation.

Further, Markov's equation has the following \emph{elementary} symmetries:
\begin{itemize}
\item assume that $(d_1, d_2, d_3)$ is a solution of \eqref{eq:markov}; then one can obtain new solutions by permutation $(d_1, d_2, d_3)$;

\item if $(d_1, d_2, d_3)$ is a solution of \eqref{eq:markov}; then we can get another one by using the following transformation
$$
\displaystyle{ v(\boldsymbol d) = (d_1, 3\, d_1 d_3 - d_2, d_3). }
$$
Note that $v^2 = \text{id}$.

\end{itemize}
It is well known that any solution of \eqref{eq:markov} can be obtained from $(1,1,1)$ by a sequence of the elementary symmetries.
\begin{center}
\begin{forest}
[(1 1 1) [(1 1 2)[(1 2 5)[(1 5 13) [(1 13 34)][(5 13 194)]][(2 5 29) [(5 29 433)][(2 29 169)]]]]]
\node at (current bounding box.south)[below=3ex]
{The first levels of the Markov number tree.};
\end{forest}
\end{center}
For any solution $\boldsymbol d$ its components are coprime; it follows that any solution of system \eqref{eq:vector_eq} with respect to $\boldsymbol c$ has the form
\begin{equation}\label{eq:vector_sol}
\displaystyle{ \boldsymbol c + k\, \boldsymbol d,\ k \in \mathbb{Z},\ \text{here  $\boldsymbol c$ is a solution of \eqref{eq:vector_eq}}. }
\end{equation}
Let us consider the matrix
$$
\displaystyle{ A = \begin{pmatrix}
1 & 0\\
1 & 1
\end{pmatrix}. }
$$
Note that $M$ and $A$ commute.
Therefore, if a triple of matrices $M_i$ is a solution, then the triple $A^{-1} M_i A$ is also a solution. Assume that a pair $(\boldsymbol c, \boldsymbol d)$ corresponds to a triple $M_i$, then $(\boldsymbol c + \boldsymbol d, \boldsymbol d)$ corresponds to $A^{-1} M_i A$. Therefore, the conjugacy class of a triple $M_i$ is uniquely determined by the corresponding solution of Markov's equation.

We see that for any triple $(d_1, d_2, d_3)$ there exists a unique torus bundle. Now, let us show that these bundles are fiberwise homeomorphic.

Let us consider a symmetry of Markov's equation and construct an automorphism $\gamma_i \mapsto \gamma_{i}^{\prime}$ of the fundamental group of our base that induces that symmetry.

\begin{enumerate}
\item \emph{cyclic permutation} $(d_1,d_2,d_3) \mapsto (d_3,d_1,d_2)$\\
It can be induced by the following automorphism
$$
\displaystyle{ \gamma_{1}^{\prime} = \gamma \gamma_{3} \gamma^{-1},\ \gamma_2^{\prime} = \gamma_{1},\ \gamma_3^{\prime} = \gamma_2.}
$$
\item \emph{reflection} $(d_1,d_2,d_3) \mapsto (d_3,d_2,d_1)$\\
It can be obtained as follows.
First let us consider the automorphism
$$
\displaystyle{ \gamma_{1}^{\prime} = \gamma_3^{-1},\ \gamma_{2}^{\prime} = \gamma_2^{-1},\ \gamma_{3}^{\prime} = \gamma_1^{-1} }
$$
that takes a triple $(M_1, M_2, M_3)$ to $(M_3^{-1}\!, M_2^{-1},\! M_1^{-1})$. The product of matrices of this triple is not equal to $M$, but equals $M^{-1}$.
Now note that the matrices $M$ and $M^{-1}$ are conjugate
$$
\displaystyle{ C^{-1} M C = M^{-1},\quad C = \begin{pmatrix}
-1 & 0\\
0 & 1
\end{pmatrix} }
$$
and any matrix $M_i$ can be obtained from the matrix $C^{-1} M_i C$ by the following transformation 
$$(c_i, d_i) \mapsto (-c_i, d_i).$$
\item \emph{Vieta jumping} $(d_1,d_2,d_3) \mapsto (d_1,3\,d_1 d_3 - d_2, d_3)$\\
The automorphism
$$
\displaystyle{ \gamma_{1}^{\prime} = \gamma_1 \gamma_2 \gamma_1^{-1},\ \gamma_{2}^{\prime} = \gamma_1,\ \gamma_{3}^{\prime} = \gamma_3}
$$
induces the transformation
$(d_1, d_2, d_3) \mapsto (3\,d_1 d_3 - d_2, d_1, d_3)$. 
Now one can use the permutation of the first two components.
\end{enumerate}

For any transformation described above, one can easily check that the corresponding transformation of $\boldsymbol c$ is integral. We do not need the explicit formulas, so they are omitted. Since equation \eqref{eq:vector_eq} has a solution for $\boldsymbol d = (1,1,1)$, it follows that it has a solution for arbitrary $\boldsymbol d$ satisfying \eqref{eq:markov}.

Finally, it is not hard to check using Dehn twist (see \cite{I}) that any automorphism described above is induced by some homeomorphism of the base space. This implies that bundles corresponding to distinct solutions of \eqref{eq:markov} are fiberwise homeomorphic.
\end{proof}

The author is grateful to Andrey Oshemkov and Anton Izosimov for useful suggestions.

This research was partially supported by the RFBR grant N.\,14-01-00119 and by the PRIN 2010-11 grant \enquote{Geometric and analytic theory of Hamiltonian
systems in finite and infinite dimensions}, of Italian Ministry of Universities and Researches.

\bibliographystyle{elsarticle-num}

\bibliography{research}

\end{document}